
\documentclass{amsart}

\usepackage{ifthen,amsthm,amssymb,latexsym,amsmath}

\usepackage{graphicx}

\input xy

\xyoption{all}

\usepackage[all]{xy}

\newcommand{\cb}{\overline{C}}

\newcommand{\cF}{\mathcal{F}}

\newcommand{\mmp}{{\mathfrak m}_P}

\newcommand{\mmpb}{{\mathfrak m}_{\overline{P}}}

\newcommand{\oo}{{\mathcal O}}

\newcommand{\op}{{\mathcal O}_P}

\newcommand{\obp}{\overline{\mathcal O}_P}

\newcommand{\pb}{\overline{P}}

\newcommand{\ra}{\rightarrow}

\newcommand{\cE}{\mathcal{E}}

\newcommand{\cM}{\mathcal{M}}

\DeclareMathOperator{\Hom}{{Hom}}

\DeclareMathOperator{\Ext}{{Ext}}

\newtheorem{coro}{Corollary}[section]

\newtheorem{lemm}[coro]{Lemma}

\newtheorem{rema}[coro]{Remark}

\newtheorem{theo}[coro]{Theorem}

\numberwithin{equation}{section}


\newcommand{\marginnote}[1]{\ifthenelse{\isodd{\thepage}}{\normalmarginpar}

{\reversemarginpar}\marginpar{\fbox{\parbox{24mm}{\sloppy\footnotesize #1}}}}

\begin{document}
\title{Torsion Free Sheaves on Cuspidal Curves}

\author{Dan Avritzer}

\author{Flaviana Andrea Ribeiro}

\author{Renato Vidal Martins}

\address{ICEx - UFMG \\
Departamento de Matem\'atica \\
Av. Ant\^onio Carlos 6627 \\
30123-970 Belo Horizonte MG, Brazil}

\email{dan@mat.ufmg.br}

\address{F.A. Ribeiro\\Departamento de Matem\'atica,
UFJF, Juiz de Fora, MG 36036-330, Brazil}
\email{flaviana.ribeiro@ufjf.edu.br}

\address{ICEx - UFMG \\
Departamento de Matem\'atica \\
Av. Ant\^onio Carlos 6627 \\
30123-970 Belo Horizonte MG, Brazil}

\email{renato@mat.ufmg.br}

\begin{abstract}

We study torsion free sheaves on integral projective curves with at most ordinary cusps as singularities. Adjusting Seshadri's structure from the nodal case to this one, we describe these sheaves by means of a triple defined in the normalization of the curve.

\end{abstract}

\maketitle

\section{Introduction}

In \cite{S}, C. S. Seshadri characterized a torsion free sheaf on a
nodal curve in terms of a triple data in its normalization. More
recently, in \cite{ALR}, H. Lange along with the first and second
named authors gave to Seshadri's result another proof within a
categorical framework, besides extending the discussion to
stability. The aim of this article is to answer whether it is
possible to apply the methods in \cite{ALR} to get a structure like
in \cite{S} to describe torsion free sheaves on cuspidal curves. So
our technique, as in \cite{ALR}, differs from the other papers on
the subject as \cite{b,bd,bdg}.

More precisely, let $C$ be a \emph{curve}, i.e., an integral and
complete one dimensional scheme over an algebraically closed field.
Let also $\overline{C}$ be its normalization. Assume, first, that
$C$ has just one singular point $P$ which is an ordinary node and
let $\pb_1$ and $\pb_2$ be the points in $\cb$ over $P$. In
\cite[Ch. 8]{S}, Seshadri totally described a torsion free sheaf
$\cF$ on such a $C$ in terms of three data: (i) a bundle $E$ on
$\cb$; (ii) a pair $(V_1,V_2)$ of vector subpaces of the fibers
$E_{(\pb_1)}$ and $E_{(\pb_2)}$ respectively; (iii) a linear
isomorphism $\sigma: V_1\to V_2$. This characterization naturally
extends to curves allowing nodes at most as singularities, no matter
how many. We think it is important to say that a key point of his
proof is certainly \cite[Prp. 2, p. 164]{S} which gives a local
description at the singular point, that is, $\cF_P\simeq \op^{\oplus
a}\oplus\mathfrak{m}_P^{\oplus r-a}$ where $r$ is the rank of $\cF$
and $a$ is an integer depending on $\cF$ which, actually, determines
the dimension of $V_1$ and $V_2$ above.

In \cite{ALR}, a categorical structure was given to this set of
triples, and it was proved that the assignment $\cF\mapsto
(E,(V_1,V_2),\sigma)$ is functorial. Here, we want to establish a
similar result when $C$ is cuspidal.

Before stating the results we have, we briefly introduce our set-up:
a point $P\in C$ is said to be a (first order) \emph{cusp} if it is
unibranch, i.e., there is a unique point $\pb\in\cb$ with $\pb$ over
$P$ and $\dim(\obp/\op)=1$. For a torsion free sheaf $\cF$ of rank
$r$ on $C$, one defines its degree as
$\deg(\cF)=\chi(\cF)-r\,\chi(\oo_C)$ where $\chi(\cF)$ denotes the
Euler characteristic of $\cF$. With this in mind, we have the
following.
\

\noindent {\bf Theorem.} \emph{Let $C$ be a curve with singular points $P_1,\ldots,P_n$, which are all cusps. Then the following hold:
\begin{itemize}
\item[(1)] If $\cF$ is a torsion free sheaf of rank $r$ on $C$, then
$$
\cF_{P_i}\simeq \oo_{P_i}^{\oplus a_i}\oplus\mathfrak{m}_{P_i}^{\oplus r-a_i}
$$
for $1\leq i\leq n$. Each integer $a_i$ is uniquely determined by
$\cF$ and called the \emph{local semirank} of $\cF$ at $P_i$ since it agrees with $r$ iff $\cF_{P_i}$ is a free $\oo_{P_i}$-module. The
\emph{semirank} of $\cF$ is the tuple $a:=(a_1,\ldots,a_n)$;
\item[(2)] Let $\overline{P}_i$ be the point of $\overline{C}$ which lies over $P_i$, for $1\leq i\leq n$;  and let $\omega_{\cb}$ be the dualizing sheaf on $\cb$. Then here is an equivalence between:
\begin{itemize}
\item[$\bullet$] the category $\mathbb{F}(r,d,a)$ of isomorphism classes of torsion free sheaves on $C$, of rank $r$, degree $d$ and semirank $a$;
\item[$\bullet$] the category $\mathbb{T}(r,d,a)$ of triples $(E,\{V_i\}_{i=1}^n,\{\sigma_i\}_{i=1}^n)$ where
\begin{itemize}
\item[(i)] $E$ is a bundle of rank $r$ and degree $d-nr-\sum_{i=1}^n a_i$ on $\overline{C}$;
\item[(ii)] $V_i$ is a vector subspace of dimension $a_i$ in the direct sum of fibers $E_{(\pb_i)}\oplus (E\otimes\omega_{\cb})_{(\pb_i)}$;
\item[(iii)] $\sigma_i$ is a linear endomorphism in $GL(V_i)$.
\end{itemize}
and where a morphism of triples
$$
\ \ \ \ \ \ \ \ \ \ \ \ \ \ \ \ \ \widetilde{\Phi} : (E,\{V_i\}_{i=1}^n,\{\sigma_i\}_{i=1}^n) \ra (E',\{V_i'\}_{i=1}^n,\{\sigma_i'\}_{i=1}^n)
$$
is a morphism $\Phi: E \ra E'$ satisfying $(\Phi_{(\pb_i)}\oplus(\Phi\otimes 1)_{(\pb_i)})(V_i) \subset V_i'$ such that
$$
\ \ \ \ \ \ \ \ \ \ \ \ \
\xymatrix{V_i \ar[d]_{\Phi_{(\pb_i)}\oplus(\Phi\otimes 1)_{(\pb_i)}} \ar[r]^{\sigma_i} &  V_i \ar[d]^{\Phi_{(\pb_i)}\oplus(\Phi\otimes 1)_{(\pb_i)}}\\
         V_i' \ar[r]^{\sigma_i'} & V_i'}
$$
is a commutative diagram for $i = 1, \dots, n$.
\end{itemize}
\end{itemize}}

It is important to point out that one is able to mix the above
result together with \cite[Thm. 3.2]{ALR} to get a categorical
characterization of torsion free sheaves on curves having double
points -- nodes or cusps -- at most as singularities by means of
triples.

\

\paragraph{\bf Acknowledgments.}
We thank Steven L. Kleiman for some fruitful discussions by email.
The first author is partially supported by CNPq grant number
301618/2008-9 and FAPEMIG grant PPM-00191-09. The third named author
is partially supported by the CNPq grant number 304919/2009-8.

\section{Proof of the Theorem}

We start by proving part (1) of the Theorem by means of the following result.

\begin{lemm}

\label{lemcar}

Let $R$ be a local ring in an algebraic function field of one variable over an algebraically closed ground field $k$. Let also $\overline{R}$ and $\mathfrak{m}$ be, respectively, its integral closure and maximal ideal. If ${\rm dim}_k(\overline{R}/R)=1$ then for any finitely generated torsion free $R$-module $M$ there are uniquely determined integers $a$ and $b$ such that
$$
M\cong R^{\oplus a}\oplus\mathfrak{m}^{\oplus b}.
$$
\end{lemm}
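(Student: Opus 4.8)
The plan is to turn the statement into a problem in linear algebra over the Artinian $k$–algebra $B:=\overline{R}/\mathfrak{c}$, where $\mathfrak{c}=(R:\overline{R})$ is the conductor, and then to classify the relevant subspaces up to the action of $GL_r(B)$. The first step is to identify the conductor. Since $\overline{R}$ is the integral closure of a one–dimensional local domain in its function field, it is a semilocal principal ideal domain, and for a cusp a discrete valuation ring with uniformizer $t$. I would begin by showing $\mathfrak{c}=\mathfrak{m}$: the quotient $\overline{R}/R$ is a one–dimensional $k$–vector space, hence a simple $R$–module, so $\mathfrak{m}$ annihilates it, giving $\mathfrak{m}\overline{R}\subseteq R$, i.e. $\mathfrak{m}\subseteq\mathfrak{c}$; the reverse inclusion is automatic since $\mathfrak{c}$ is a proper ideal of $R$. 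Consequently $\mathfrak{m}$ is an ideal of $\overline{R}$, and $\dim_k B=\dim_k(\overline{R}/R)+\dim_k(R/\mathfrak{m})=2$; in the cuspidal case $\mathfrak{m}=\mathfrak{m}_{\overline{R}}^{2}=t^{2}\overline{R}$ and $B\cong k[\eps]/(\eps^{2})$.

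Next comes the reduction. Put $\overline{M}:=\overline{R}M$; as $\overline{R}$ is a PID and $\overline{M}$ is finitely generated and torsion free, $\overline{M}\cong\overline{R}^{\oplus r}$. Because $\mathfrak{m}$ is an $\overline{R}$–ideal, $\mathfrak{m}\overline{M}=\mathfrak{m}M\subseteq M$, so $M$ is sandwiched as $\mathfrak{m}\overline{M}\subseteq M\subseteq\overline{M}$ and is the full preimage of the $k$–subspace $W:=M/\mathfrak{m}\overline{M}$ inside $V:=\overline{M}/\mathfrak{m}\overline{M}\cong B^{\oplus r}$. The condition $\overline{R}M=\overline{M}$ says precisely that $W$ generates $V$ over $B$. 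Since any $R$–isomorphism $M\to M'$ extends uniquely to an $\overline{R}$–isomorphism $\overline{M}\to\overline{M}'$, and since $GL_r(\overline{R})\twoheadrightarrow GL_r(B)$, the isomorphism classes of such $M$ correspond bijectively to the $GL_r(B)$–orbits of generating $k$–subspaces $W\subseteq V$. This converts the lemma into the classification of those orbits together with the identification of a normal form with $R^{\oplus a}\oplus\mathfrak{m}^{\oplus b}$.

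Uniqueness is then immediate: the quotient $\overline{M}/M\cong V/W$ is intrinsic to $M$, so $a:=\dim_k(\overline{M}/M)$ and $b:=r-a$ are determined by $M$ (one checks $a=\dim_k(\overline{M}/M)$ on the model $R^{\oplus a}\oplus\mathfrak{m}^{\oplus b}$, using that $\mathfrak{m}\cong\overline{R}$ as $R$–modules). For existence I would produce the normal form $W=\langle e_1,\dots,e_a\rangle_k\oplus\bigoplus_{i>a}Be_i$, whose preimage is $Re_1\oplus\cdots\oplus Re_a\oplus\bigoplus_{i>a}\overline{R}e_i\cong R^{\oplus a}\oplus\mathfrak{m}^{\oplus b}$. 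In the cuspidal case this is a two–step computation: writing $b=\dim_k(W\cap\eps V)$, I first use the constant matrices $GL_r(k)\subseteq GL_r(B)$ to move $W\cap\eps V$ onto $\langle \eps e_{a+1},\dots,\eps e_r\rangle_k$, and then use the unipotent kernel $\{1+\eps N\}$ of $GL_r(B)\to GL_r(k)$ — which fixes $\eps V$ pointwise and hence preserves $W\cap\eps V$ — to straighten a chosen section of $W\twoheadrightarrow V/\eps V$ to $e_i\mapsto e_i$.

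The main obstacle is exactly this orbit computation, namely proving that the normal form can always be reached; the linear algebra above settles the cuspidal case cleanly, whereas the general $\delta=1$ situation (in which $B$ may be $k\times k$, as for a node) reduces instead to Seshadri's two–subspace analysis. A secondary point requiring care is the surjectivity of $GL_r(\overline{R})\to GL_r(B)$, which is what makes $R$–isomorphism classes match $GL_r(B)$–orbits exactly.
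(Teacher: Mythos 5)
Your proof is correct, but it takes a genuinely different route from the paper's. The paper follows Seshadri: it arranges the sandwich $R^{\oplus r}\subseteq M\subseteq\overline{R}^{\,\oplus r}$, studies the image of $M$ in the $r$-dimensional space $(\overline{R}/R)^{\oplus r}=k^{r}\bar{t}$, builds a basis $\{v_1,\dots,v_a,w_1,\dots,w_b\}$ of $k^r$ adapted to that image, and verifies by direct computation that $M=Rv_1\oplus\dots\oplus Rv_a\oplus\overline{R}w_1\oplus\dots\oplus\overline{R}w_b$; uniqueness comes from the two invariants $\rk(M\otimes K)$ and $\rk(M\otimes k)$. You instead run the conductor-square argument: the observation $\mathfrak{c}=\mathfrak{m}$ (which the paper never isolates, though it is implicitly what makes $\overline{R}\cong\mathfrak{m}$ work) gives the sandwich $\mathfrak{m}\overline{M}\subseteq M\subseteq\overline{M}$ and reduces everything to $GL_r(B)$-orbits of generating $k$-subspaces of $B^{\oplus r}$ with $B=\overline{R}/\mathfrak{m}$ two-dimensional. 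This buys a cleaner uniqueness statement ($a=\dim_k(\overline{M}/M)$ is manifestly intrinsic), a transparent normal form, and a tidy two-step orbit computation ($GL_r(k)$ to position $W\cap\eps V$, then the unipotent kernel $1+\eps N$ to straighten a section), all of which check out; the surjectivity of $GL_r(\overline{R})\to GL_r(B)$ holds because $\mathfrak{m}$ lies in the Jacobson radical of $\overline{R}$. The one caveat is scope: the lemma as stated covers all $\delta=1$ singularities, and your orbit analysis is only carried out when $B\cong k[\eps]/(\eps^2)$; for $B\cong k\times k$ you defer to Seshadri's two-subspace analysis, whereas the paper's single argument is meant to cover both cases uniformly. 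Since the paper needs the lemma only at cusps and the nodal case is precisely the cited result of Seshadri, this is a reasonable division of labor, but a fully self-contained proof of the stated lemma would require completing the $k\times k$ orbit computation as well.
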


\begin{proof}
Set $K|k$ to be the algebraic function field. Under the hypothesis above we have $k\subset R\subset\overline{R}\subset K$ where $K$ and $k$, respectively, agree with the field of fractions and residue field of $R$. If $M$ splits as above then $a+b=r:={\rm rk}(M\otimes K)$ and $a+2b={\rm rk}(M\otimes k)$ so $a$ and $b$ are uniquely determined by $M$.

Since $\overline{R}$ is a semi-local Dedekind domain, it is a unique factorization domain; so $\overline{M}:=(M\otimes\overline{R})/{\rm Torsion}(M\otimes\overline{R})$ is free. Moreover, one is able to choose a basis for $\overline{M}$ constitued by elements in $M$. Indeed, by Nakayama's Lemma, we need only take elements in $M$ whose residues form a free basis of $\overline{M}/J\overline{M}$ where $J$ is the Jacobson radical of $\overline{R}$. Therefore, we may assume
\begin{equation}
\label{equinb}
R^{\oplus r} \subset M \subset \overline{R}^{\,\oplus r}.
\end{equation}

If so, there is a well defined morphism
$$
\varphi: M \longrightarrow  (\overline{R}/R)^{\oplus r}.
$$
Since ${\rm dim}(\overline{R}/R)=1$ we are able to write $\overline{R}/R=k\bar{t}$ where $\bar{t}$ is the class of a fixed $t \in \overline{R}$. Now we choose a basis for ${\rm im}\varphi$ which thus can be written as $\{ w_1\bar{t},\ldots,w_b\bar{t}\}$ where $w_1,\ldots,w_b$ are linearly independent vectors in $k^r$. Set $a:=r-b$ and choose $v_1,\ldots,v_a$ in $k^r$ such that $B:=\{v_1,\ldots,v_a,w_1,\ldots,w_b\}$ is a basis for $k^r$.

Now $B$ yields a new decomposition of $\overline{R}^{\,\oplus r}$. In fact, for any $f \in \overline{R}^{\,\oplus r}$, there are uniquely determined $g_1,\ldots,g_a,h_1,\ldots,h_b \in \overline{R}$ such that
\begin{equation}
\label{equfgh}
f= g_1v_1+\ldots+g_av_a+h_1w_1+\ldots+h_bw_b
\end{equation}
because, by the very Cramer's Rule, the $g_i$ and $h_i$ are the only solutions of a system of which the column vector has entries in $\overline{R}$ and the main matrix has entries in $k$ and nonzero determinant since $B$ is a basis for $k^r$.

We will prove that
$$
M=Rv_1\oplus\ldots\oplus Rv_a\oplus \overline{R}w_1\oplus\ldots\oplus\overline{R}w_b.
$$

In order to do so, write again any $f \in \overline{R}^{\,\oplus r}$ as in (\ref{equfgh}). We first claim that if $f\in M$, then $g_1,\ldots,g_a \in R$. Indeed, if $f\in M$ then $f$ is of the form
\begin{equation}
\label{equfft}
f=(f_1,\ldots,f_r)+(c_1w_1+\ldots+c_bw_b)t
\end{equation}
where $f_i \in R$ and $c_i \in k$. In fact, clearly
$$
f=(f_1+d_1t,...,f_r+d_rt)
$$
for $f_i \in R$ and $d_i \in k$ because $\overline{R}/R=k\bar{t}$. Now
\begin{align*}
\varphi(f)&=\varphi(f_1+d_1t,\ldots,f_r+d_rt) \\
          &=\varphi(d_1t,\ldots,d_rt) \\
          &=(d_1\bar{t},\ldots,d_r\bar{t}) \\
          &=c_1w_1\bar{t}+\ldots+c_bw_b\bar{t}
\end{align*}
because ${\rm im}\varphi$ is generated by the $w_i\bar{t}$. So
$$
(d_1,\ldots,d_r)=c_1w_1+\ldots+c_bw_b
$$
and $f$ is in fact written as in (\ref{equfft}). The problem now simplifies to the following one: if $f\in R^{\oplus r}$ then the
$g_i$ are in $R$. But this is immediate: the same basis $B$ which yields a new decomposition for $\overline{R}^{\,\oplus r}$, yields a new
decomposition for $R^{\oplus r}$ as well by means of the same argument.

Conversely, given $f\in\overline{R}^{\,\oplus r}$ such that $g_1,\ldots,g_a \in R$ then $f\in M$. Indeed, if so, write $h_i=c_it+f_i$ where $c_i \in k$ and $f_i\in R$, take $m \in M$ such that $\varphi(m)=c_1w_1\bar{t}+\ldots+c_bw_b\bar{t}$, then
$$
m=(f_1',\ldots,f_r')+(c_1w_1+\ldots+c_bw_b)t
$$
with $f_i'\in R$. Set
$$
m'= g_1v_1+\ldots+g_av_a+f_1w_1+\ldots+f_bw_b-(f_1',\ldots,f_r').
$$
Then $m'\in M$ because $A^r \subset M$. Now $f=m+m'$ so $f \in M$.

The result follows from the isomorphism $\overline{R}\cong\mathfrak{m}$ when ${\rm dim}(\overline{R}/R)=1$.
\end{proof}

\begin{rema}
\label{remscd} \emph{The above lemma generalizes \cite[Prp. 2, p.
164]{S}. We could have said that, up to slight changes, the same
proof applies but we did not, due to some relevant reasons. The
proof in \cite{S} is focused on how to get (\ref{equinb}). Here, the
same argument is put within a general form, holding for a local ring
of any singular point in any curve. On the other hand, in \cite{S}
it is stated that the desired decomposition is immediate from
(\ref{equinb}) once we assume the hypothesis and construct a
suitable basis. Actually, the reader should note the ``unusual" way
a basis which works for our purposes is constructed here, and we
still wonder how to get the lemma with the identification
$\overline{R}/R=k$, as done in \cite{S}. At any rate, we hope the
above proof helps the reader understand Seshadri's ideas.}
\end{rema}

Now we prove part (2) of the Theorem. For the sake of simplicity, we
assume the curve has just one singular point. The general case is
straight forward, once this is proved.

\begin{theo}
If $C$ is a curve with only one singular point $P$, which is a cusp,
then the categories $\mathbb{F}(r,d,a)$ and $\mathbb{T}(r,d,a)$ are
equivalent.
\end{theo}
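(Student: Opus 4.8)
The plan is to prove the equivalence by exhibiting two mutually quasi-inverse functors $\Psi\colon\mathbb{F}(r,d,a)\to\mathbb{T}(r,d,a)$ and $\Theta\colon\mathbb{T}(r,d,a)\to\mathbb{F}(r,d,a)$, following the categorical strategy of \cite{ALR}. Since $C$ and $\overline{C}$ agree away from $P$, everything will reduce to a local analysis at the cusp, driven by the normalization map $\pi\colon\overline{C}\to C$ and the adjunction between $\pi^*$ and $\pi_*$. Throughout I would use part (1) of the Theorem (Lemma \ref{lemcar}) to put $\cF_P\cong\op^{\oplus a}\oplus\mmp^{\oplus r-a}$, so that the semirank $a$ is available from the start and the dimension counts are forced.

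For the functor $\Psi$, given $\cF$ I would first form the saturation $E_0:=(\pi^*\cF)/\mathrm{Torsion}$, a rank-$r$ bundle on $\overline{C}$, with the canonical injection $\cF\hookrightarrow\pi_*E_0$ whose cokernel is a length-$a$ skyscraper at $P$ (this is exactly where the local form from Lemma \ref{lemcar} enters). An Euler-characteristic bookkeeping then gives $\deg E_0=d-r+a$, so $E_0$ is \emph{not} the bundle demanded by $\mathbb{T}(r,d,a)$. I would therefore define $E$ as the elementary lower modification of $E_0$ at $\pb$ that drops the degree by $2a$, contracting $a$ of the local generators into the conductor $\mmp=t^2\obp$; since $\obp/\mmp$ has length $2$, each contracted generator costs $2$, giving $\deg E=d-r-a$ as required. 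The factor $2$ is precisely the cusp: the local ring $\op=k[[t^2,t^3]]\subset k[[t]]=\obp$ forces the descent datum to remember not only the value of a section at $\pb$ but also its first jet, and that jet is naturally valued in $E\otimes\omega_{\overline{C}}$ (the cotangent direction $dt$). This explains both summands of $E_{(\pb)}\oplus(E\otimes\omega_{\overline{C}})_{(\pb)}$; the subspace $V$ of dimension $a$ together with the self-map $\sigma\in GL(V)$ then records how the $a$ contracted generators are glued back — a single endomorphism rather than the node's isomorphism between two distinct fibers, precisely because the cusp is unibranch.

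For the functor $\Theta$ I would run the construction backwards. Given $(E,V,\sigma)$, the dual count gives $\chi(\cF)=\chi(\pi_*E)+a$, so I would realize $\cF$ as the intermediate sheaf $\pi_*E\subset\cF\subset\pi_*E^{+}$, where $E^{+}$ is the upper modification of $E$ at $\pb$ and $\cF_P$ is the $\op$-submodule cut out by $V$ and $\sigma$ through the jet identification above. Then I would verify that $\cF$ is torsion free of rank $r$, that $\deg\cF=d$, and that its local type is $\op^{\oplus a}\oplus\mmp^{\oplus r-a}$, so that $\Theta$ indeed lands in $\mathbb{F}(r,d,a)$. On morphisms, a map $\Phi\colon E\to E'$ induces $\pi_*\Phi$, and the condition $(\Phi_{(\pb)}\oplus(\Phi\otimes 1)_{(\pb)})(V)\subset V'$ with the commuting $\sigma$-square is exactly the requirement that $\pi_*\Phi$ preserve the prescribed submodules at $P$; conversely every morphism of sheaves localizes at $P$ to such data, which should yield faithfulness and fullness.

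Finally I would produce natural isomorphisms $\Theta\circ\Psi\cong\mathrm{id}$ and $\Psi\circ\Theta\cong\mathrm{id}$. Away from $P$ both composites are the identity, so the whole content sits at the cusp: finitely generated torsion-free $\op$-modules, together with their morphisms, should biject with pairs $(V,\sigma)$ of a subspace of the fiber augmented by its jet and an automorphism, compatibly with $\Hom$. I expect the main obstacle to be exactly this local-to-global functoriality step — making the reconstruction at the cusp canonical enough to be functorial and checking that the endomorphism $\sigma$ and the $\omega_{\overline{C}}$-twisted summand faithfully encode $\Hom$ of sheaves (the commuting square). This is the genuinely new phenomenon relative to \cite{S} and \cite{ALR}: for the node the gluing is an isomorphism between the two branch fibers, whereas the cusp's single branch, carrying first-order data, replaces it by an endomorphism on a subspace of a fiber enlarged by $E\otimes\omega_{\overline{C}}$.
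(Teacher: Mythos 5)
Your outline identifies the right objects: your $E$ (the lower modification of $(\pi^*\cF)/\mathrm{Torsion}$ of colength $2a$ at $\pb$) coincides with the paper's bundle, namely the $E$ with $\pi_*E=\cE$ where $\cE$ is the kernel of $\cF\to k_P^{\oplus a}$, and your reading of the two summands $E_{(\pb)}$ and $(E\otimes\omega_{\cb})_{(\pb)}$ as value-plus-first-jet is exactly the paper's isomorphism $\mu$ out of $(\oo_{\pb}/\mathfrak{m}_{\pb}^2)\otimes E_{\pb}$. But the proposal stops precisely where the proof begins. You never say what ``the $\op$-submodule cut out by $V$ and $\sigma$'' is, and the most natural reading --- take the preimage in $\pi_*E(2\pb)$ of the subspace $V$ --- does not work: for $r=a=1$, if $V$ is the line spanned by $\bar{t}\otimes s$ (contained in the $\omega$-summand), its preimage in $\obp$ is $t\obp$, which is torsion free of the correct degree but is isomorphic to $\mmp$ as an $\op$-module, hence has semirank $0$, so $\Theta$ would not land in $\mathbb{F}(r,d,1)$. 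Thus the step you defer as ``I would verify \dots that its local type is $\op^{\oplus a}\oplus\mmp^{\oplus r-a}$'' is not routine bookkeeping; it is the crux, and it fails for the naive construction. Likewise you never define $\sigma$: an automorphism of a single subspace carries no evident gluing information at a unibranch point (unlike the node's isomorphism between two distinct branch fibers), and saying what $\sigma$ actually records is part of what must be proved.

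The paper supplies the missing mechanism homologically rather than by submodule lattices. It realizes $\cF$ as an extension $0\to\cE\to\cF\to k_P^{\oplus a}\to 0$, computes $\Ext^1(k_P^{\oplus a},\cE)\simeq\Hom(\cM^{\oplus a},\cE)/\Hom(\oo_C^{\oplus a},\cE)$, transports this by adjunction along $\pi$ to $\Hom(k_{\pb}^{\oplus a},(\oo_{\pb}/\mathfrak{m}_{\pb}^2)\otimes E_{\pb})$, and then proves by an explicit computation with the pushout presentation $\cF_P\simeq(\op^{\oplus a}\oplus\mmp^{\oplus r})/\{(x,-\phi_P(x))\}$ that $\cF$ is torsion free if and only if the classifying map $\varphi$ is injective; only then are $V:={\rm im}(\mu\circ\varphi)$ and $\sigma:=\mu\circ\varphi\circ\nu$ extracted, and full faithfulness is checked by chasing the extension diagrams. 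None of this --- the identification of the extension group, the torsion-freeness criterion, the actual definition of $\sigma$, or the diagram chase for morphisms --- appears in your proposal, and you yourself flag the local-to-global functoriality as an unresolved ``main obstacle.'' As it stands this is a correct plan of attack with the central local lemma missing.
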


\begin{proof}
Let $\cF$ be a torsion free sheaf on $C$ of semirank $a$ at $P$. Consider the surjective homomorphism
$$ \mathcal{F}_P\longrightarrow k_P^{\oplus a}$$
where $k_P$ denotes the residual field at the point $P$. It yields an exact sequence
\begin{equation}
\label{equext}
0\longrightarrow {\cE}\longrightarrow {\cF}\longrightarrow k_P^{\oplus a} \longrightarrow 0
\end{equation}
where $k_P^{\oplus a}$ is the skyscraper sheaf concentrated at $P$. The kernel $\cE$ is uniquely determined by $\cF$ and satisfies
$$
{\cE}_{P} \simeq \mmp^{\oplus r}.
$$
So $\cF$ can be seen as an extension given by (\ref{equext}). In order to characterize such an extension, let $\cM$ be the sheaf on $C$ defined by
$$ \cM_Q=
\begin{cases}
\mathfrak{m}_P &\ \ {\rm if}\ Q=P \\
\mathcal{O}_Q   &\ \ {\rm if}\ Q\neq P.
\end{cases}
$$
We claim that
\begin{equation}
\label{equexc}
{\Ext}^1(k_P^{\oplus a},{\cE})\simeq {\Hom}(\cM^{\oplus a},\cE)/{\rm Hom}(\oo_C^{\oplus a},{\cE}).
\end{equation}
In fact, we may assume $h^1(\mathcal{E})=0$ twisting $\mathcal{E}$ if needed. The exact sequence
$$
0\longrightarrow\cM^{\oplus a} \stackrel{i}\longrightarrow \oo_C^{\oplus a} \longrightarrow k_P^{\oplus a}\longrightarrow 0
$$
yields the long exact sequence
$$
{\rm Hom}(k_P^{\oplus a},{\cE}) \longrightarrow {\Hom}(\oo_C^{\oplus a},\cE) \longrightarrow {\Hom}(\cM^{\oplus a},\cE) \longrightarrow
$$ $$
\longrightarrow
{\Ext}^1(k_P^{\oplus a},{\cE}) \longrightarrow {\Ext}^1(\oo_C^{\oplus a},{\cE}).
$$
But ${\rm Hom}(k_P^{\oplus a},{\cE})=0$ since $k_P^{\oplus a}$ is a torsion sheaf, and ${\Ext}^1(\oo_C^{\oplus a},{\cE})\simeq H^1(\cE)^{\oplus a}=0$
since $h^1(\mathcal{E})=0$. So (\ref{equexc}) is proved. It corresponds to the bottom level of the diagram
\begin{tiny}
\begin{equation}\label{equdia}
\xymatrix@C-0.5pc@R-0.5pc{
\oo_{\cb}(-2\pb)^{\oplus a} \ar@{-}[dd]\ar[dr]^{\overline{\phi}}\ar[rr]&&
\oo_{\cb}^{\oplus a} \ar@{-}[dd]\ar[dr]^{\overline{\phi}\otimes 1}\ar[dl]^{\overline{\phi}'}\ar[drrr]^{\varphi} &&
\\
&
E \ar@{-}[dd]\ar[rr] &&
E(2\pb) \ar@{.}[dd]\ar[rr] &&
(\oo_{\pb}/\mathfrak{m}_{\pb}^2)\otimes E_{\pb} \ar@{.}[dd]
\\
\cM^{\oplus a}\ar[rr]^{i}\ar[dr]^{\phi} &&
\oo_{C}^{\oplus a} \ar[dr]\ar[rr]\ar[dl]^{\phi'} &&
k_{P}^{\oplus a} \ar@{=}[dr]
\\
&
\cE \ar[rr]\ar@{=}[dr] &&
\cF~ \ar[dr] \ar[rr] &&
k_{P}^{\oplus a} \ar[dr]^{\psi}\ar[dl]^{\psi'}
\\
&& \cE \ar[rr] && I_0 \ar[rr]^{d} && I_1' }
\end{equation}
\end{tiny}

\noindent where $0\to\cE\to I_0 \stackrel{d}\to I_1\to\cdots$ is an injective resolution and $I_1':={\rm im}\,d$. So (an isomorphism class of) a torsion
 free sheaf $\cF$ on $C$ is determined by a morphism $\phi: \cM^{\oplus a}\to\cE$ (up to a $\phi': \oo_C^{\oplus a}\to\cE$) or,
 equivalently, by a morphism $\psi:k_{P}^{\oplus a}\to I_1'$ (up to a $\psi':k_{P}^{\oplus a}\to I_0$); the sheaf $\cF$ is the pushout
 of $i$ and $\phi$ or, equivalently, the pullback of $d$ and $\psi$.

Now we build the top part of diagram (\ref{equdia}), which will
provide the functor we are looking for. Let $\pi:\cb\to C$ be the
normalization map and let $E$ be the bundle on $\cb$ such that
$$
\cE=\pi_*(E).
$$
We have that ${\Hom}(\oo_C^{\oplus a},\cE)={\Hom}(\oo_C^{\oplus a},\pi_{*}(E))\simeq {\Hom}(\oo_{\cb}^{\oplus a},E)$ and we also have
\begin{align*}
{\Hom}(\cM^{\oplus a},\cE) &={\Hom}(\cM^{\oplus a},\pi_{*}(E))\simeq {\Hom}(\pi^{*}(\cM^{\oplus a}),E) \\
                &\simeq{\Hom}(\pi^{*}(\cM^{\oplus a})/{\rm Torsion}(\pi^{*}(\cM^{\oplus a})),E) \\
                & \simeq {\Hom}(\oo_{\cb}(-2\pb)^{\oplus a},E)
\end{align*}
by adjunction formula and by the fact that ${\Hom}({\rm Torsion}(\pi^{*}(\cM^{\oplus a})),E)=0$. Hence
\begin{equation}
\label{equgrx}
{\Hom}(\cM^{\oplus a},\cE)/{\Hom}(\oo_C^{\oplus a},\cE)\simeq {\Hom}(\oo_{\cb}(-2\pb)^{\oplus a},E)/{\Hom}(\oo_{\cb}^{\oplus a},E).
\end{equation}
So $\cF$ is also determined by $\overline{\phi}: \mathcal{O}_{\cb}(-2\pb)^{\oplus a}\to E$ (up to a $\overline{\phi}':\mathcal{O}_{\cb}^{\oplus a}\to E$).
But we may write
\begin{equation}
\label{equgrt}
{\Hom}(\oo_{\cb}(-2\pb)^{\oplus a},E)/{\Hom}(\oo_{\cb}^{\oplus a},E)\simeq {\Hom}(\oo_{\cb}^{\oplus a},E(2\pb))/{\Hom}(\oo_{\cb}^{\oplus a},E)
\end{equation}
switching $\overline{\phi}$ to $\overline{\phi}\otimes 1$ and same
target to same source.


Now, from
$$
0\longrightarrow \oo_{\cb}(-2\pb)\longrightarrow \oo_{\cb}\longrightarrow  \oo_{\pb}/\mathfrak{m}_{\pb}^2 \longrightarrow 0
$$
tensored by $E(2\pb)$ we get
$$
0\longrightarrow E\longrightarrow E(2\pb)\longrightarrow (\oo_{\pb}/\mathfrak{m}_{\pb}^2)\otimes E_{\pb}\longrightarrow 0
$$
where $E_{\pb}$ is the stalk at $\pb$. The above sequence yields the long exact sequence
$$
0 \longrightarrow {\rm Hom}(\oo_{\cb}^{\oplus a},E) \longrightarrow {\Hom}(\oo_{\cb}^{\oplus a},E(2\pb))\longrightarrow
$$
$$
\longrightarrow {\Hom}(\oo_{\cb}^{\oplus a},(\oo_{\pb}/\mathfrak{m}_{\pb}^2)\otimes E_{\pb}) \longrightarrow {\Ext}^1(\oo_{\cb}^{\oplus a},E).
$$
Now ${\Ext}^1(\oo_{\cb}^{\oplus a},E)=H^1(E)=H^1(\pi_*(E))=H^1(\cE)$
which we may assume to vanish and we get
\begin{equation}
\label{equlon} {\Hom}(\oo_{\cb}^{\oplus a},E(2\pb))/{\rm
Hom}(\oo_{\cb}^{\oplus a},E) \simeq {\Hom}(\oo_{\cb}^{\oplus
a},(\oo_{\pb}/\mathfrak{m}_{\pb}^2)\otimes E_{\pb});
\end{equation}
and hence $\cF$ is determined by a morphism
$\varphi:\oo_{\cb}^{\oplus
a}\to(\oo_{\pb}/\mathfrak{m}_{\pb}^2)\otimes E_{\pb}$ and diagram
(\ref{equdia}) pictured above is now complete. It establishes the
equivalences
\begin{equation}
\label{equeqv}
[\cF]\ \leftrightarrow\ \psi\ ({\rm mod}\ \psi')\ \leftrightarrow\ \phi\ ({\rm mod}\ \phi')\ \leftrightarrow\ \overline{\phi}\ ({\rm mod}\ \overline{\phi}')\ \leftrightarrow\ \overline{\phi}\otimes 1\ ({\rm mod}\ \overline{\phi}')\ \leftrightarrow\ \varphi
\end{equation}
given by (\ref{equext}), (\ref{equexc}), (\ref{equgrx}),
(\ref{equgrt}) and (\ref{equlon}). Note that in (\ref{equdia}), the
vertical full lines correspond to pull back operations modded out by
torsion; however, the quotient $\pi^{*}(\cF)/{\rm
Torsion}(\pi^{*}(\cF))$ is just a subsheaf of $E(2\pb)$ unless $\cF$
is a bundle. That's why the correspondent vertical line is dotted.

Now let $k_{\pb}$ be the residual field at $\pb$. Use the natural
isomorphism
$$
{\Hom}(\oo_{\cb}^{\oplus a},(\oo_{\pb}/\mathfrak{m}_{\pb}^2)\otimes E_{\pb})\simeq{\Hom}(k_{\pb}^{\oplus a},(\oo_{\pb}/\mathfrak{m}_{\pb}^2)\otimes E_{\pb})
$$
in order to consider $\varphi:k_{\pb}^{\oplus
a}\to(\oo_{\pb}/\mathfrak{m}_{\pb}^2)\otimes E_{\pb}$. We claim that
if a coherent sheaf $\cF$ fits into a diagram like (\ref{equdia}),
then it is torsion free if and only if $\varphi$ is injective.  To
prove this, first, let $\{e_{i}\}_{i=1}^{a}$ denote the standard
basis of $k^{\oplus a}$, take $E_{\pb}=\oplus_{j=1}^r \oo_{\pb}s_i$
and write
$$
\varphi(e_i)=\bigoplus_{j=1}^r\ (a_{ij}\bar{1}+b_{ij}\bar{t})\otimes s_j
$$
with $a_{ij},b_{ij}\in k_{\pb}\simeq k$. Now $\cF$ satisfies the following local commutative diagram
$$
\xymatrix{
0\ar[r] & \mmp^{\oplus a}\ar[r]^{i} \ar[d]^{\phi_P} & \op^{\oplus a} \ar[r]\ar[d] &
k_P\ar@{=}[d]  \ar[r] & 0 \\
0\ar[r] & \mmp^{\oplus r}\ar[r] & \cF_P \ar[r] & k_P \ar[r] & 0
}$$
and $\cF_P$ is the pushout of $\phi_P$ and $i$. Hence
$$
\cF_P\simeq \oo_P^{\oplus a}\oplus\mmp^{\oplus r}/\{(x,-\phi_P(x))\,|\, x\in\mmp^{\oplus a}\}.
$$
But note that $\phi_P$ is, by construction, closely related to the $\varphi$ above. Indeed, one may write
$$
\phi_P(t^2e_{i})=\bigoplus_{j=1}^r\ a_{ij}t^2+b_{ij}t^3+h_{ij}t^4
$$
where $t$ is a local parameter for the integral closure $\obp\simeq\oo_{\pb}$ and $h_{ij}\in\obp$. Assume $\varphi$ is not injective. Then for a nonzero $v=(v_1,\dots,v_a)\in k_{\pb}^{\oplus a}\simeq k^{\oplus a}\subset\op^{\oplus a}$ we have $\varphi(v)=0$. Thus clearly $\overline{(v,-t^2(\sum_{i=1}^a\oplus_{j=1}^r v_ih_{ij}))}\neq 0$ in $\cF_P$ because $v\not\in\mmp^{\oplus a}$. But
$$
t^2\overline{\left(v,-t^2\left(\sum_{i=1}^a\bigoplus_{j=1}^r v_ih_{ij}\right)\right)} =\overline{\left(t^2v,-t^4\left(\sum_{i=1}^a\bigoplus_{j=1}^r v_ih_{ij}\right)\right)} =\overline{(t^2v,-\phi_P(t^2v))}=0
$$
hence $\cF_P$ has torsion and so does $\cF$. , suppose
$z\overline{(x,y)}=0$. Then $zy=-\phi_P(zx)$. If $z\neq 0$ and
$x\in\mmp^{\oplus a}$ then $y=-\phi_P(x)$ which implies
$\overline{(x,y)}=0$. Thus $\cF_P$ admits torsion only if
$$ c\,t^m+d\,t^{m+1}+ht^{m+2}\overline{\left(\left(\bigoplus_{i=1}^a
v_i+h_it^2\right),\left(\bigoplus_{j=1}^r
w_jt^{n_j}+g_{j}t^{n_j+1}\right)\right)}=0
$$
for some $v_i,w_j,c,d\in k$ and $h,h_i,g_j\in\obp$ where $c\neq 0$,
every $w_j\neq 0$, all $n_j\geq 2$ and $v_{i'}\neq 0$ for some
$i'\in\{1,\dots,a\}$. Writing $\phi_P=(\phi_{P1},\dots,\phi_{Pr})$,
then for every $j\in\{1,\dots,r\}$ we have
\begin{align*}
cw_jt^{m+n_j}+\dots &=-\phi_{Pj}\left(\bigoplus_{i=1}^a cv_i\,t^m+dv_i\,t^{m+1}+\dots\right) \\
                    &=-c\left(\sum_{i=1}^a v_ia_{ij}\right)t^m-\left(c\sum_{i=1}^a v_ib_{ij}+d\sum_{i=1}^a v_ia_{ij}\right)t^{m+1}+\dots
\end{align*}
and since $n_j\geq 2$ we get
$$
\sum_{i=1}^a v_ia_{ij}=\sum_{i=1}^a v_ib_{ij}=0
$$
which means that for $v=(v_1,\dots,v_a)$ we have $\varphi(v)=0$. But
$v\neq 0$ since $v_{i'}\neq 0$. So $\varphi$ is not injective.

Now consider
\begin{equation}
\label{equwww}
W:=(\oo_{\pb}/\mathfrak{m}_{\pb}^2)\otimes_{\oo_{\pb}}
E_{\pb}=(\oo_{\pb}/\mmpb\oplus_{k_{\pb}}\mmpb/\mmpb^2)\otimes_{\oo_{\pb}}E_{\pb}.
\end{equation}
Note that we are not able to use the distributive law here. On the
other hand, $W$ is a vector space over $k_{\pb}$ of dimension $2r$
with basis $\{\bar{1}\otimes s_i ,\bar{t}\otimes s_i\}_{i=1}^r$.
Moreover, $\oo_{\pb}/\mmpb\otimes_{\oo_{\pb}}E_{\pb}$ is by
definition the fiber $E_{(\pb)}$. Besides $\mmpb/\mmpb^2$ is
naturally associated to the fiber of the dualizing sheaf
$\omega_{\cb}$ at $\pb$. This leads us to consider the following
linear isomorphism of vector spaces over $k_{\pb}$, defined on
generators as
\begin{gather}
\label{eqummm}
\begin{matrix}
\mu : & W & \longrightarrow & E_{(\pb)}\oplus_{k_{\pb}} (E\otimes_{\oo_{\cb}}\omega_{\cb})_{(\pb)}\\
         & \bar{1}\otimes s_i            & \longmapsto     & (s_i\otimes 1)\oplus 0 \\
         & \bar{t}\otimes s_i            & \longmapsto     & 0\oplus (s_i\otimes dt\otimes 1).
\end{matrix}
\end{gather}

This enables us to finally take care of the other two items in the
triple we are aiming to build, besides the bundle $E$ we have
already defined. In fact, for a given torsion free sheaf $\cF$ of
semirank $a$ on $C$, consider the injective
$\varphi\in{\Hom}(k_{\pb}^{\oplus a},W)$ associated to it. Set
$$
V:=\mu\circ\varphi(k_{\pb}^{\oplus a})\subset E_{(\pb)}\oplus
(E\otimes\omega_{\cb})_{(\pb)}.
$$
Now given any $a$-dimensional subspace $V$ of $E_{(\pb)}\oplus
(E\otimes\omega_{\cb})_{(\pb)}$ one may fix a priori a basis
$\{v_i\}_{i=1}^a$ and consider the isomorphism
\begin{gather*}
\begin{matrix}
\nu : & V & \longrightarrow & k_{\pb}^{\oplus a} \\
         & v_i           & \longmapsto     & e_i
\end{matrix}
\end{gather*}
This allows us to define the automorphism
$$
\sigma:=\mu\circ\varphi\circ\nu:V\longrightarrow V
$$
which is in $GL(V)$ since $\varphi$ is injective and $\mu,\nu$ are
bijective.

We are ready to define the functor we are looking for. Set
\begin{gather*}
\begin{matrix}
\mathfrak{F} : &{\rm Ob}(\mathbb{F}(r,d,a)) & \longrightarrow & {\rm Ob}(\mathbb{T}(r,d,a))\\
               &  [\cF]             & \longmapsto     & (E,V,\sigma)
\end{matrix}
\end{gather*}
It is well defined since, for $g$ and $\overline{g}$ denoting the
arithmetic genus of $C$ and $\overline{C}$, we have
\begin{align*}
\deg_{\cb}E &=\deg_{C}\cE -r(g-\overline{g}) \\
            &=(\deg_{C}\cF+a)-r \\
            &=d+a-r
\end{align*}

Now we define how the functor acts on morphisms. First, recall from the statement of the theorem in the introduction, that a morphism of triples
$$
\ \ \ \ \ \ \ \ \ \ \ \ \ \ \ \ \ \widetilde{\Phi} : (E,V,\sigma) \ra (E',V',\sigma')
$$
is a morphism $\Phi: E \ra E'$ satisfying $(\Phi_{(\pb)}\oplus(\Phi\otimes 1)_{(\pb)})(V) \subset V'$ such that
\begin{equation}
\label{equcom}
\ \ \ \ \ \ \ \ \ \ \ \ \
\xymatrix{V \ar[d]_{\Phi_{(\pb)}\oplus(\Phi\otimes 1)_{(\pb)}} \ar[r]^{\sigma} &  V \ar[d]^{\Phi_{(\pb)}\oplus(\Phi\otimes 1)_{(\pb)}}\\
         V' \ar[r]^{\sigma'} & V'}
\end{equation}
is a commutative diagram.

So let $f: \cF \ra \cF'$ be a homomorphism of torsion free sheaves
on $C$ and set $\mathfrak{F}(\cF) := (E,V,\sigma)$ and $\mathfrak{F}(\cF') := (E',V',\sigma')$. If
$\cE \subset \cF$ and $\cE' \subset \cF'$ are the subsheaves as in (\ref{equext}), then clearly $f$ maps $\cE$ into $\cE'$. It lifts to a morphism of vector bundles
$$
\Phi:=\pi^*(f|\cE)/\text{torsion}: E\longrightarrow E'.
$$
on the normalization.

We have the following commutative diagram
\begin{eqnarray} \label{diag2}
\xymatrix{      0 \ar[r] & \cE \ar[r] \ar[d]^{f|\cE} &  \cF \ar[d]^{f} \ar[r] & k_P^{\oplus a} \ar[r] \ar[d]^{f_{P}} & 0\\
                0 \ar[r] & \cE' \ar[r] & \cF' \ar[r] & k_P^{\oplus a'} \ar[r] & 0.         }
\end{eqnarray}
and an induced map $f_{\pb}:k_{\pb}^{\oplus a }\to k_{\pb}^{\oplus a'}$. Set $W':=(\oo_{\pb}/\mathfrak{m}_{\pb}^2)\otimes
E'_{\pb}$ as in (\ref{equwww}) and $\mu' : W' \to E'_{(\pb)}\oplus (E'\otimes\omega_{\cb})_{(\pb)}$ as in (\ref{eqummm}).
Let $\varphi\in\Hom(k_{\pb}^{\oplus a},W)$ and $\varphi'\in\Hom(k_{\pb}^{\oplus a'},W')$ denote the
 homomorphisms associated to the extensions of diagram \eqref{diag2} according to (\ref{equeqv}). By
definition the following diagram commutes
\begin{eqnarray*}
\xymatrix{
 k_{\pb}^{\oplus a} \ar[d]_{f_{\pb}} \ar[r]^{\varphi}  &    W \ar[r]^{\mu\ \ \ \ \ \ \ \ \ \ \ \ } & E_{(\pb)}\oplus (E\otimes\omega_{\cb})_{(\pb)}  \ar[d]^{\Phi_{(\pb)}\oplus(\Phi\otimes 1)_{(\pb)}}\\
 k_{\pb}^{\oplus a'}\ar[r]^{\varphi'}   &  W' \ar[r]^{\mu'\ \ \ \ \ \ \ \ \ \ \ \  } & E'_{(\pb)}\oplus (E'\otimes\omega_{\cb})_{(\pb)}         }
\end{eqnarray*}
Since $V:=\mu\circ\varphi(k_{\pb}^{\oplus a})$ and $V':=\mu'\circ\varphi(k_{\pb}^{\oplus a'})$ we have $(\Phi_{(\pb)}\oplus(\Phi\otimes 1)_{(\pb)})(V) \subset V'$. The fact that the diagram (\ref{equcom})
commutes for any choice of basis $\sigma$ and $\sigma'$ in $V$ and $V'$ is straightforward.

The functor $\mathfrak{F}$ is now defined both on objects and morphisms. By construction it is bijective on objects, so it remains to prove that $\mathfrak{F}$ is fully faithful.

So let $\widetilde{\Phi}: (E,V,\sigma) \ra (E',V', \sigma')$ be a homomorphism
of triples on $\cb$. Let $\cF$ and $\cF'$ be torsion free
sheaves on $C$ with $\Psi([\cF]) = (E,V, \sigma)$
and $\Psi([\cF']) = (E',V', \sigma')$.

Note that both $\mu\circ\varphi$ and $\mu'\circ\varphi'$ are linear isomorphisms onto $V$ and $V'$,
so $\Phi$ induces a map $f_{\pb}:k_{\pb}^{\oplus a}\to k_{\pb}^{\oplus a'}$ and hence a
map $f_{P}:k_{P}^{\oplus a}\to k_{P}^{\oplus a'}$.  This yields a map $\tilde{f}_P:\mathcal{M}^{\oplus a}\to\mathcal{M}^{\oplus a'}$,
as in the diagram
\begin{tiny}
$$\xymatrix{
\mathcal{M}^{\oplus a} \ar[dd] \ar@{^{(}->}[rr] \ar[dr]^{\tilde{f}_P}
&&
\mathcal{O}_C^{\oplus a} \ar[dd] \ar@{->>}[rr]
&&
k_P^{\oplus a}\ar@{=}[dd] \ar[drr]^{f_P}
\\
&
\mathcal{M}^{\oplus a'} \ar[dd] \ar@{^{(}->}[rr]
&&
\mathcal{O}_C^{\oplus a'}\ar[dd] \ar@{->>}[rrr]
&&&
k_P^{\oplus a'} \ar@{=}[dd]
\\
\cE \ar@{^{(}->}[rr] \ar[dr]^{\pi_*(\Phi)}
&&
\cF \ar@{->>}[rr] \ar@{-->}[dr]^{f}
&&
k_P^{\oplus a}\ar[drr]^{f_P}
\\
&
\cE' \ar@{^{(}->}[rr] && \cF' \ar@{->>}[rrr] &&& k^{\oplus a'} }$$
\end{tiny}
The universal property
of the push-out gives us a homomorphism
$$
f: \cF \ra \cF'
$$
such that the whole diagram commutes. It is uniquely determined as a map of extensions so $\mathfrak{F}$ is fully faithful.
\end{proof}

\end{document}